\theoremstyle{definition}
\newtheorem{dfn}{Definition}[section]
\theoremstyle{remark}
\newtheorem{rmk}[dfn]{Remark}
\numberwithin{equation}{subsection}
\theoremstyle{plain}
\newtheorem{thm}[dfn]{Theorem}
\newtheorem{prop}[dfn]{Proposition}
\newtheorem{lem}[dfn]{Lemma}
\newtheorem{cor}[dfn]{Corollary}
\renewcommand{\leq}{\leqslant}
\renewcommand{\geq}{\geqslant}
\newcommand{\Z}{\mathbb{Z}}
\newcommand{\Q}{\mathbb{Q}}
\newcommand{\R}{\mathbb{R}}
\title{Torsions of integral homology and cohomology of real Grassmannians}
\subjclass[2010]{Primary 57T15; Secondary 05A15, 14M15}
\keywords{Real Grassmannian, Integral homology and cohomology, Mod 2 torsion}
\author[He]{\bfseries Chen He}
\email{hechen123@gmail.com}
\begin{document}

\vspace{18mm} \setcounter{page}{1} \thispagestyle{empty}

\begin{abstract}
According to Ehresmann \cite{Eh37}, the torsions of integral homology of real Grassmannian are all of order $2$. In this note, We compute the $\Z_2$-dimensions of torsions in the integral homology and cohomology of real Grassmannian. 
\end{abstract}

\maketitle


\section{Introduction}
\vskip 15pt

The study of homology groups of real Grassmannian $G_k(\R^n)$ was introduced by Ehresmann \cite{Eh37} in terms of Schubert cells $[a_1,a_2,\ldots,a_k],\,0\leq a_1\leq a_2\leq\ldots\leq a_k \leq n-k$ of dimensions $\sum_{i=1}^{k}a_i$.

Noticing the boundary map of those Schubert cells vanishes in $\Z_2$ coefficients, Ehresmann proved 

\begin{thm}[\cite{Eh37} P73]
	The Schubert cells give a basis of $H_*(G_k(\R^n),\Z_2)$.
\end{thm}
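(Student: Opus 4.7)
The plan is to apply cellular homology directly. Since the Schubert cells give a CW decomposition of $G_k(\R^n)$, they form a $\Z_2$-basis of the cellular chain complex $C_\ast^{\mathrm{CW}}(G_k(\R^n);\Z_2)$. It therefore suffices to show that the cellular boundary map $\partial$ vanishes identically with $\Z_2$-coefficients; if so, $H_\ast(G_k(\R^n);\Z_2)=C_\ast^{\mathrm{CW}}(G_k(\R^n);\Z_2)$ with the Schubert cells as a basis, as claimed.

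First I would identify, for a cell $\sigma=[a_1,\ldots,a_k]$ of dimension $m$, which $(m-1)$-dimensional Schubert cells $\tau$ lie in its closure: these are exactly the sequences obtained from $(a_1,\ldots,a_k)$ by lowering a single entry $a_i$ by one, subject to the monotonicity constraint $a_{i-1}\leq a_i-1$. For each such pair $(\sigma,\tau)$ I must compute the incidence number $[\sigma:\tau]\in\Z$, which is the degree of the composite $S^{m-1}\to\overline{\sigma}\to\overline{\sigma}/(\overline{\sigma}\setminus\sigma^\circ)\cong S^{m-1}$ obtained from the attaching map of $\sigma$, projected to the summand corresponding to $\tau$.

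Next I would compute this degree using an explicit affine parametrization of $\sigma$ by matrices in reduced row-echelon form with respect to the standard flag: the Schubert cell $\sigma$ is diffeomorphic to $\R^m$, with coordinates filling in the free entries of the echelon matrix, and the stratum $\tau$ corresponds to a codimension-one degeneration in which a single one of these free entries is forced to specialize. Reading off the attaching map in these coordinates, one finds that a generic point of $\tau^\circ$ has exactly two preimages on the boundary sphere of $\sigma$, related by a coordinate involution that reverses local orientation; hence the degree is even.

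The main obstacle is establishing the orientation-reversing involution uniformly for every pair $(\sigma,\tau)$. The clean way is to exhibit, for each such pair, a free $\Z_2$-action on the preimage of a generic point of $\tau^\circ$ under the attaching map, acting by $-1$ on the local orientation. In the Schubert chart this can be produced by a sign change on the coordinate corresponding to the entry that degenerates when passing from $\sigma$ to $\tau$; a careful bookkeeping of the echelon form shows that this coordinate sign change preserves the image in $\tau$ but reverses orientation on $\sigma^\circ$. Once this is verified, $\partial\equiv 0 \pmod 2$ and the theorem follows.
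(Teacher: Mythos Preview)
Your proposal is correct and follows exactly the approach the paper indicates: the paper simply records (citing Ehresmann) that the cellular boundary map on Schubert cells vanishes in $\Z_2$-coefficients, and you are supplying the standard geometric justification for this---that each incidence number $[\sigma:\tau]$ is even because the attaching map is generically two-to-one over $\tau^\circ$ with the two sheets exchanged by an orientation-reversing sign involution. There is no substantive difference in strategy; you are just filling in details the paper leaves to the reference.
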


Every dimension-$d$ Schubert cell $[a_1,a_2,\ldots,a_k],\,0\leq a_1\leq a_2\leq\ldots\leq a_k \leq n-k,\,\sum_{i=1}^{k}a_i =d$ corresponds to a partition of $d$. 

\begin{dfn}
	Denote $p(M, N; d)$ as the number of partitions of $d$ into at most $M$ non-negative integers, each of size at most $N$. If $d$ is not a non-negative integer, $p(M, N; d)$ is understood to be $0$.
\end{dfn}

\begin{cor}
	The degree-$d$ mod-$2$ Betti number $\mathrm{dim}_{\Z_2}\,H_d(G_k(\R^n),\Z_2)$ is $p(k, n-k; d)$.
\end{cor}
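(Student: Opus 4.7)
The plan is to deduce the corollary directly from the preceding theorem of Ehresmann, which asserts that the Schubert cells form a $\Z_2$-basis of $H_*(G_k(\R^n),\Z_2)$. Once this is granted, the mod-$2$ Betti number $\dim_{\Z_2} H_d(G_k(\R^n),\Z_2)$ equals the number of Schubert cells of topological dimension $d$, so the task reduces to a purely combinatorial count.

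Next I would unpack the indexing. A Schubert cell is a symbol $[a_1,a_2,\ldots,a_k]$ with $0\leq a_1\leq a_2\leq\ldots\leq a_k\leq n-k$, and its dimension is $\sum_{i=1}^{k}a_i$. I would therefore set up the bijection between such tuples and partitions of $d$: given a non-decreasing tuple $(a_1,\ldots,a_k)$ summing to $d$, reverse it to obtain a non-increasing sequence $\lambda_i:=a_{k+1-i}$, which is a partition of $d$ with at most $k$ parts (allowing parts equal to $0$ accounts for the freedom to leave some $a_i=0$) each of size at most $n-k$. The inverse map is the same reversal, so this is clearly a bijection.

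Combining these two observations, the number of Schubert cells of dimension $d$ equals the number of partitions of $d$ fitting into the $k\times(n-k)$ rectangle, which by definition is $p(k,n-k;d)$. This gives the claimed identity.

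There is really no serious obstacle here; the only subtle point is bookkeeping the conventions (non-decreasing versus non-increasing, and the allowance of zero parts so that ``at most $k$ parts'' matches a tuple of length exactly $k$), which I would state carefully but briefly to make the bijection transparent.
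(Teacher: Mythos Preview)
Your argument is correct and is exactly the paper's approach: the corollary is stated without a separate proof, relying on the sentence just before the definition of $p(M,N;d)$ noting that each dimension-$d$ Schubert cell corresponds to a partition of $d$ into at most $k$ parts each at most $n-k$. Your write-up simply spells out that bijection in more detail than the paper does.
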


The generating function of $p(k, n-k; d)$ with respect to $d$ is a polynomial of degree $k(n-k)$ and has a nice expression (see \cite{An76} P33 Theorem 3.1).

\begin{dfn}
	Given non-negative integers $k\leq n$, the Gaussian binomial coefficient $\binom{n}{k}_t$ is a polynomial in variable $t$:
	\[
	\frac{\prod_{i=1}^{n}(1-t^i)}{\prod_{i=1}^{k}(1-t^i)\prod_{i=1}^{n-k}(1-t^i)}=\frac{\prod_{i=n-k+1}^{n}(1-t^i)}{\prod_{i=1}^{k}(1-t^i)}
	\]
\end{dfn}

\begin{cor}\label{cor:Poinc2}
	The Poincar\'e polynomial of $H_*(G_k(\R^n),\Z_2)$ is
	\[
	\sum_{d\geq 0}p(k, n-k; d)t^d =\binom{n}{k}_t=\frac{\prod_{i=1}^{n}(1-t^i)}{\prod_{i=1}^{k}(1-t^i)\prod_{i=1}^{n-k}(1-t^i)}
	\]
\end{cor}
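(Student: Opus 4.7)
The plan is to combine the previous corollary, which identifies $\dim_{\Z_2} H_d(G_k(\R^n),\Z_2)$ with $p(k,n-k;d)$, with the classical generating-function identity
\[
\sum_{d \geq 0} p(k,n-k;d)\, t^d = \binom{n}{k}_t .
\]
The first equality in the displayed formula is then immediate from the previous corollary, and the second equality is just the definition of the Gaussian binomial. So the content of the statement reduces to the purely combinatorial identity above, counting partitions fitting in a $k \times (n-k)$ rectangle.

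I would prove this identity by induction on $n$, with base cases $k=0$ and $k=n$, in which both sides equal $1$. For the induction step, I would split the set of partitions $(a_1 \leq \cdots \leq a_k)$ with $a_k \leq n-k$ according to the size of the largest part: if $a_k < n-k$, the partition fits inside a $k \times (n-k-1)$ box, contributing $\sum_d p(k, n-k-1; d)\, t^d$; if $a_k = n-k$, removing this part leaves a partition in a $(k-1) \times (n-k)$ box, contributing $t^{n-k}\sum_d p(k-1, n-k; d)\, t^d$. This yields the partition recurrence
\[
\sum_{d} p(k,n-k;d)\, t^d \;=\; \sum_{d} p(k,n-k-1;d)\, t^d \;+\; t^{n-k}\sum_{d} p(k-1,n-k;d)\, t^d .
\]

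Then I would verify that the Gaussian binomial satisfies the matching Pascal-type recurrence
\[
\binom{n}{k}_t \;=\; \binom{n-1}{k}_t \;+\; t^{n-k}\binom{n-1}{k-1}_t ,
\]
which is an elementary algebraic consequence of the product formula, obtained by factoring $(1-t^n) = (1-t^{n-k}) + t^{n-k}(1-t^k)\cdot\text{(appropriate factor)}$ in the numerator. Matching initial data and recurrence closes the induction.

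The only real obstacle is the bookkeeping in verifying the algebraic Pascal-type identity for $\binom{n}{k}_t$; both the combinatorial decomposition and the algebraic manipulation are short and standard. An alternative would be to quote Theorem 3.1 of \cite{An76} directly, as the paper already does at the point of stating the corollary.
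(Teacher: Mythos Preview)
Your proposal is correct and aligns with the paper's approach: the paper does not supply its own proof of this corollary but simply invokes the previous corollary for the Betti numbers and cites \cite{An76} (p.~33, Theorem~3.1) for the generating-function identity $\sum_d p(k,n-k;d)\,t^d = \binom{n}{k}_t$. Your inductive sketch via the Pascal-type recurrence is the standard proof of that cited result, so you are filling in a detail the paper leaves to the reference rather than taking a different route.
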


For the integral homology of real Grassmannian, Ehresmann showed 
\begin{thm}[\cite{Eh37} P81]\label{thm:Ehresmann2}
	The torsions of $H_*(G_k(\R^n),\Z)$ are all of order $2$.
\end{thm}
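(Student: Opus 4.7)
\emph{Proof proposal.} The natural strategy is to analyze the integral cellular chain complex $(C_\bullet,\partial)$ of $G_k(\R^n)$ coming from the Schubert cell decomposition, with $C_d$ the free abelian group on Schubert cells $[a_1,\ldots,a_k]$ of dimension $d=\sum a_i$.

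The first task is to compute $\partial$ explicitly. The codimension-one Schubert cells contained in the closure of $[a_1,\ldots,a_k]$ are those of the form $[a_1,\ldots,a_i-1,\ldots,a_k]$ with the tuple remaining weakly increasing, and the attaching degree onto each such face can be read off in explicit Schubert charts, for instance using reduced row-echelon representatives. The central claim, and the main technical obstacle, is that every such degree lies in $\{0,\pm 2\}$. The factor of $2$ reflects the $\Z/2$-symmetry $v\mapsto -v$ on a real line, which is absent in the complex case (where $G_k(\C^n)$ has only even-dimensional Schubert cells and torsion-free integral homology); determining precisely when the degree vanishes versus equals $\pm 2$ reduces to a combinatorial parity rule on the entries $a_i$.

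Once the boundary has been written as $\partial = 2\tilde\partial$ for an integer chain map $\tilde\partial$, the torsion statement follows by a short exact sequence argument. Multiplication by $2$ being injective on the free abelian group $C_\bullet$, the relation $\partial^2 = 0$ forces $\tilde\partial^2 = 0$, and one has $\ker\partial_d = \ker\tilde\partial_d =: Z_d$ together with $\mathrm{im}\,\partial_{d+1} = 2\,\mathrm{im}\,\tilde\partial_{d+1} =: 2\tilde B_d$. Since $\tilde B_d$ is a subgroup of a free abelian group it is itself free, so one obtains the natural short exact sequence
\[
0 \longrightarrow \tilde B_d\otimes\Z_2 \longrightarrow H_d(C_\bullet,\partial) \longrightarrow H_d(C_\bullet,\tilde\partial) \longrightarrow 0
\]
in which the left-hand term is already a $\Z_2$-vector space. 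It then suffices to show that $H_\bullet(C_\bullet,\tilde\partial)$ is torsion-free, whereupon the sequence splits and the torsion subgroup of $H_d(G_k(\R^n);\Z)$ is identified with $\tilde B_d\otimes\Z_2$, annihilated by $2$. The torsion-freeness of the auxiliary homology can in turn be argued by combining the Bockstein long exact sequence of $0 \to \Z \xrightarrow{2} \Z \to \Z_2 \to 0$ with the mod-$2$ Betti numbers supplied by Corollary~\ref{cor:Poinc2} and the rational Betti numbers of the real Grassmannian, forcing the elementary divisors of $\tilde\partial$ to be $1$.
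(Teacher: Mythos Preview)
The paper does not give its own proof of this theorem; it is quoted from Ehresmann with only the remark that he used ``an explicit boundary map of the Schubert cells in $\Z$ coefficients.'' Your outline follows that same route, and everything through the short exact sequence
\[
0\longrightarrow \tilde B_d\otimes\Z_2\longrightarrow H_d(C_\bullet,\partial)\longrightarrow H_d(C_\bullet,\tilde\partial)\longrightarrow 0
\]
is correct and is indeed the right framework. The fact that the incidence numbers lie in $\{0,\pm2\}$ is precisely Ehresmann's computation.

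The genuine gap is your last paragraph. Knowing the mod-$2$ and rational Betti numbers of $(C_\bullet,\partial)$, together with the Bockstein long exact sequence, is \emph{not} sufficient to force the elementary divisors of $\tilde\partial$ to be $1$, even granting that all entries of $\tilde\partial$ lie in $\{0,\pm1\}$. Take the two-term complex $\Z^2\xrightarrow{\,2\tilde\partial\,}\Z^2$ with
\[
\tilde\partial=\begin{pmatrix}1&1\\1&-1\end{pmatrix}.
\]
Here $\partial\equiv 0\pmod 2$ so the mod-$2$ Betti numbers are $(2,2)$, and $\tilde\partial\otimes\Q$ is invertible so the rational Betti numbers vanish; yet $H_0(C,\partial)\cong\Z_2\oplus\Z_4$. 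Replacing $\tilde\partial$ by the identity matrix yields identical mod-$2$ and rational data but $H_0\cong\Z_2\oplus\Z_2$. The Bockstein exact sequence cannot separate these two cases, and in spectral-sequence language what you are asserting is degeneration at $E_2$, which is \emph{equivalent} to the theorem rather than a consequence of the Betti numbers. What is actually needed---and where Ehresmann's work lies---is a direct combinatorial analysis of $\tilde\partial$ exploiting the specific parity rule on the $a_i$ that you allude to, showing by hand that $(C_\bullet,\tilde\partial)$ decomposes into free and acyclic summands; this cannot be replaced by a numerical count.
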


Using an explicit boundary map of the Schubert cells in $\Z$ coefficients, Ehresmann also gave the descriptions of the free and torsion parts of $H_*(G_k(\R^n),\Z)$. 

In this short note, we will give the $\Z_2$-dimensions of the torsion part of $H_*(G_k(\R^n),\Z)$ and $H^*(G_k(\R^n),\Z)$ with the help of the understanding of $H^*(G_k(\R^n),\Z_2)$ and $H^*(G_k(\R^n),\Q)$.

\vskip 20pt
\section{Cohomology rings and Poincar\'e polynomial of real Grassmannian}
\vskip 15pt

Dually, we can consider the cohomology groups of real Grassmannian in terms of Schubert cohomology classes. On the other hand, the ring structure of cohomology of real Grassmannian can be nicely given in terms of characteristic classes.

In $\Z_2$ coefficients, the cohomology ring structure and the relations between Schubert classes and Stiefel-Whitney classes were given by Chern for real Grassmannians. 

\begin{thm}[\cite{Ch48} P370 Theorem 2]
	Let $w_1,w_2,\ldots,w_k$ be the Stiefel-Whitney classes of the universal bundle on $G_k(\R^n)$, and $\bar{w}_1,\bar{w}_2,\ldots,\bar{w}_{n-k}$ be those of the complementary bundle, then
	\[
	H^*(G_k(\R^n),\Z_2)=\frac{\Z_2[w_1,w_2,\ldots,w_k;\bar{w}_1,\bar{w}_2,\ldots,\bar{w}_{n-k}]}{(1+w_1+\cdots+w_k)(1+\bar{w}_1+\cdots+\bar{w}_{n-k})=1}
	\]
\end{thm}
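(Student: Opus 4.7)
The plan is to construct a surjective graded ring homomorphism onto $H^*(G_k(\R^n),\Z_2)$ from the presented ring, and to force it to be an isomorphism by matching Hilbert series with Corollary \ref{cor:Poinc2}. Let $\gamma$ be the tautological rank-$k$ bundle on $G_k(\R^n)$ and $\gamma^\perp$ its orthogonal complement inside the trivial $\R^n$-bundle. I would send $w_i\mapsto w_i(\gamma)$ and $\bar w_j\mapsto w_j(\gamma^\perp)$. Since $\gamma\oplus\gamma^\perp$ is trivial, the Whitney sum formula gives $w(\gamma)\cdot w(\gamma^\perp)=1$, so the defining relation is automatically satisfied and we obtain a well-defined graded ring homomorphism
\[
\phi\colon \frac{\Z_2[w_1,\ldots,w_k;\bar w_1,\ldots,\bar w_{n-k}]}{(1+w_1+\cdots+w_k)(1+\bar w_1+\cdots+\bar w_{n-k})=1}\longrightarrow H^*(G_k(\R^n),\Z_2).
\]

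Surjectivity follows from Schubert calculus: by Theorem 1.0.1 the mod-$2$ Schubert classes form an additive basis; the one-part special Schubert classes $[0,\ldots,0,j]$ coincide with $\bar w_j$; and the Jacobi--Trudi/Giambelli determinantal formula writes any Schubert class as a $k\times k$ polynomial determinant in the $\bar w_j$'s. Hence the $\bar w_j$ alone already generate $H^*(G_k(\R^n),\Z_2)$ as a ring, and $\phi$ is surjective.

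For the Hilbert-series comparison, I would use the relation to eliminate $\bar w_j=u_j(w_1,\ldots,w_k)$ where $u_j$ is the degree-$j$ component of the formal inverse $(1+w_1+\cdots+w_k)^{-1}$, for $j\le n-k$. The identity $W\cdot W^{-1}=1$ yields in characteristic $2$ the recursion $u_i=w_1u_{i-1}+\cdots+w_k u_{i-k}$, so imposing only $u_{n-k+1}=\cdots=u_n=0$ automatically kills every $u_i$ for $i>n$. The presented ring thus becomes $R=\Z_2[w_1,\ldots,w_k]/(u_{n-k+1},\ldots,u_n)$, a candidate complete intersection cut out by $k$ relations of degrees $n-k+1,\ldots,n$. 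Granting that these form a regular sequence, the Koszul resolution gives Poincar\'e polynomial
\[
\frac{\prod_{i=1}^{k}(1-t^{n-k+i})}{\prod_{i=1}^{k}(1-t^i)}=\binom{n}{k}_t,
\]
matching $H^*(G_k(\R^n),\Z_2)$ by Corollary \ref{cor:Poinc2}, whence $\phi$ is an isomorphism.

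The main obstacle is verifying that $u_{n-k+1},\ldots,u_n$ form a regular sequence in $\Z_2[w_1,\ldots,w_k]$, i.e.\ that the quotient $R$ really is a complete intersection of the expected Hilbert polynomial rather than something larger. One can handle this with a direct Gr\"obner-basis argument identifying suitable leading monomials of the $u_i$, or bypass it entirely via a Borel-theorem approach: realize $G_k(\R^n)$ as the homotopy fiber of the Whitney-sum classifying map $BO(k)\times BO(n-k)\to BO(n)$, then combine $H^*(BO(m),\Z_2)=\Z_2[w_1,\ldots,w_m]$ with a collapse of the Serre spectral sequence to read off the presentation directly.
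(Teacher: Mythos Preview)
The paper does not give its own proof of this theorem: it is quoted verbatim from Chern \cite{Ch48} as background, with no argument supplied. So there is no ``paper's proof'' to compare your proposal against.

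That said, your outline is the standard and correct one. The map $\phi$ is well defined by the Whitney sum formula; surjectivity via the identification of special Schubert classes with the $\bar w_j$ together with the Giambelli determinant is valid over $\Z_2$; and the elimination of the $\bar w_j$ to rewrite the source as $\Z_2[w_1,\ldots,w_k]/(u_{n-k+1},\ldots,u_n)$ is correct, including your recursion argument that the relations $u_i=0$ for $i>n$ are redundant.

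The one genuine gap you flag---that $u_{n-k+1},\ldots,u_n$ form a regular sequence, so that the source has the expected Hilbert series $\binom{n}{k}_t$ rather than something larger---cannot be finessed by the surjection alone: a non-regular sequence would make the source \emph{bigger}, and surjectivity only gives the inequality in the useless direction. Your two proposed fixes are both legitimate. Note, however, that the paper itself points to exactly the tool you need: Remark~\ref{rmk:BorelLemma} cites Borel's Lemma 26.2 (and the treatment in \cite{BT82}, pp.\ 294--297) precisely to compute the Poincar\'e polynomial of this quotient ring and confirm it equals $\binom{n}{k}_t$. Invoking that remark is the cleanest way to close your argument within the paper's own framework, and it is essentially your second suggested route (the Borel/fibration approach) rather than a Gr\"obner computation.
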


\begin{rmk}\label{rmk:BorelLemma}
	Using a Lemma of Borel (\cite{Bo53} P190 Lemma 26.2, also see \cite{BT82} P294-297), the Poincar\'e polynomial of the above quotient of polynomial ring can be calculated to be the Gaussian binomial coefficient $\binom{n}{k}_t$, same as we see in Corollary \ref{cor:Poinc2}.
\end{rmk}

In rational coefficients, the cohomology ring of even-dimensional real Grassmannian follows from a general theorem of Leray and Borel.

\begin{thm}[\cite{Le49} P1902 Theorem a, \cite{Bo53} P191 Theorem 26.1]
	For compact connected Lie group $G$ and its closed subgroup $H$ containing the same maximal torus $T$, denote the Weyl groups of $G,H$ as $W_G,W_H$, and the symmetric algebra of the dual Lie algebra $\mathfrak{t}^*$ as $\mathbb{S}\mathfrak{t}^*$, then
	\[
	H^*(G/H,\Q)=(\mathbb{S}\mathfrak{t}^*)^{W_H}/(\mathbb{S}\mathfrak{t}^*)_+^{W_G}
	\]
	where $(\mathbb{S}\mathfrak{t}^*)^{W_H}$ is the set of $W_H$ invariants, and $(\mathbb{S}\mathfrak{t}^*)_+^{W_G}$ is the set of positive-degree $W_G$ invariants. 
\end{thm}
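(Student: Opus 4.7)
My plan is to derive the theorem from the Borel fibration $G/H \to BH \to BG$ induced by the inclusion $H\hookrightarrow G$, combined with the classical computation of the rational cohomology of classifying spaces of compact connected Lie groups.

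First I would establish the preliminary identification $H^*(BK,\Q)\cong(\mathbb{S}\mathfrak{t}^*)^{W_K}$ for any compact connected Lie group $K$ with maximal torus $T$ (this is the special case $H=T$ of the statement, and serves as the standard starting point). The argument goes through the fibration $K/T\to BT\to BK$: since $\chi(K/T)=|W_K|\neq 0$ and $H^*(K/T,\Q)$ is concentrated in even degrees, the Serre spectral sequence degenerates and shows that $H^*(BT,\Q)=\mathbb{S}\mathfrak{t}^*$ is a free module over $H^*(BK,\Q)$, with the latter identified as the subring of $W_K$-invariants. Applied to $K=G$ and $K=H$ separately, and observing that $N_H(T)\subseteq N_G(T)$ forces $W_H\subseteq W_G$, this realizes the natural map $H^*(BG,\Q)\to H^*(BH,\Q)$ as the inclusion $(\mathbb{S}\mathfrak{t}^*)^{W_G}\hookrightarrow(\mathbb{S}\mathfrak{t}^*)^{W_H}$.

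Next I would invoke the Chevalley--Shephard--Todd theorem: since $W_G$ and $W_H$ act on $\mathfrak{t}^*$ as finite reflection groups, both invariant rings are themselves polynomial algebras, and $(\mathbb{S}\mathfrak{t}^*)^{W_H}$ is a free module over $(\mathbb{S}\mathfrak{t}^*)^{W_G}$ of rank $|W_G|/|W_H|$. This freeness is precisely the hypothesis of the Leray--Hirsch theorem applied to $G/H\to BH\to BG$: the Serre spectral sequence of the fibration collapses at $E_2$ and yields an isomorphism of $H^*(BG,\Q)$-modules $H^*(BH,\Q)\cong H^*(BG,\Q)\otimes_\Q H^*(G/H,\Q)$. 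Restricting to the fiber, i.e.\ tensoring with $\Q$ over $H^*(BG,\Q)$, kills the positive-degree $W_G$-invariants and yields the desired presentation $H^*(G/H,\Q)\cong(\mathbb{S}\mathfrak{t}^*)^{W_H}/(\mathbb{S}\mathfrak{t}^*)_+^{W_G}$.

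The main technical obstacle is rigorously justifying the collapse of the spectral sequence, since a literal application of Leray--Hirsch requires pre-existing knowledge of a $\Q$-basis of $H^*(G/H,\Q)$ lifted to $H^*(BH,\Q)$. I would close this gap by a rank count: Hopf's formula applied to the tower $H/T\to G/T\to G/H$ gives $\chi(G/H)=|W_G|/|W_H|$, which matches the rank of $(\mathbb{S}\mathfrak{t}^*)^{W_H}$ as a $(\mathbb{S}\mathfrak{t}^*)^{W_G}$-module, so a Poincar\'e polynomial comparison across the fibration forces all higher differentials to vanish for dimensional reasons. Alternatively, one can run the Eilenberg--Moore spectral sequence $\mathrm{Tor}_{H^*(BG,\Q)}(H^*(BH,\Q),\Q)\Rightarrow H^*(G/H,\Q)$, which degenerates immediately from the module freeness established above and delivers the identification in one step.
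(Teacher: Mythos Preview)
The paper does not prove this theorem; it is quoted verbatim as a classical result of Leray and Borel, with explicit citations to \cite{Le49} and \cite{Bo53}, and is used as a black box to compute $H^*(G_k(\R^n),\Q)$ in the even-dimensional case. So there is no in-paper proof to compare against.

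That said, your outline is essentially Borel's original argument from \cite{Bo53} and is sound. One small expository wrinkle: when you appeal to the degeneration of the Serre spectral sequence for $K/T\to BT\to BK$, you invoke the fact that $H^*(K/T,\Q)$ lives in even degrees, but that fact is itself usually extracted from the same fibration (or from a Bruhat-type cell decomposition), so you should be careful not to argue circularly. Borel handles this by first showing that the transgression sends a set of polynomial generators of $H^*(BT,\Q)=\mathbb{S}\mathfrak{t}^*$ to a regular sequence in $H^*(BK,\Q)$, from which both the identification $H^*(BK,\Q)\cong(\mathbb{S}\mathfrak{t}^*)^{W_K}$ and the even-degree concentration of $H^*(K/T,\Q)$ follow simultaneously. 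Your alternative via the Eilenberg--Moore spectral sequence is cleaner and avoids this issue, since the freeness of $(\mathbb{S}\mathfrak{t}^*)^{W_H}$ over $(\mathbb{S}\mathfrak{t}^*)^{W_G}$ kills all higher $\mathrm{Tor}$ groups directly.
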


For any real number $a$, denote $[a]$ as it integer part. We can apply the above Leray-Borel theorem to even-dimensional real Grassmannian. 

\begin{thm}
	Suppose the dimension $k(n-k)$ of $G_{k}(\R^{n})$ is even. Let $p_1,p_2,\ldots,p_{[\frac{k}{2}]}$ be the Pontryagin classes of the universal bundle on $G_k(\R^n)$, and $\bar{p}_1,\bar{p}_2,\ldots,\bar{p}_{[\frac{n-k}{2}]}$ be those of the complementary bundle, then
	\begin{align*}
	H^*({G}_{k}(\R^{n}),\Q) =\frac{\Q[p_1,p_2,\ldots,p_{[\frac{k}{2}]};\bar{p}_1,\bar{p}_2,\ldots,\bar{p}_{[\frac{n-k}{2}]}]}{(1+p_1+\cdots+p_{[\frac{k}{2}]})(1+\bar{p}_1+\cdots+\bar{p}_{[\frac{n-k}{2}]})=1}
	\end{align*}
\end{thm}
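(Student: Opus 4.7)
The plan is to realize $G_k(\R^n)$ as a rational quotient of a homogeneous space of connected compact Lie groups so that the stated Leray--Borel theorem applies, and then descend via a $\Z_2$-action from the oriented to the unoriented Grassmannian. Since $SO(n)$ acts transitively on $G_k(\R^n)$ with stabilizer $S(O(k)\times O(n-k))$ whose identity component is $SO(k)\times SO(n-k)$, the oriented Grassmannian $\tilde G_k(\R^n):=SO(n)/(SO(k)\times SO(n-k))$ is a double cover of $G_k(\R^n)$, and rationally $H^*(G_k(\R^n),\Q)\cong H^*(\tilde G_k(\R^n),\Q)^{\Z_2}$ with the deck transformation $\sigma$. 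Applying Leray--Borel to $\tilde G_k(\R^n)$ requires $SO(k)\times SO(n-k)$ and $SO(n)$ to share a maximal torus, i.e.\ $\lfloor k/2\rfloor+\lfloor (n-k)/2\rfloor=\lfloor n/2\rfloor$, which holds exactly when $k$ and $n-k$ are not both odd---precisely the assumption that $k(n-k)$ is even.

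Next I would write out the Weyl-invariant subrings on the standard Cartan coordinates. The Weyl group of $SO(2m+1)$ is the full hyperoctahedral group of signed permutations of $x_1,\ldots,x_m$, with invariants $\Q[p_1,\ldots,p_m]$ where $p_i$ is the $i$-th elementary symmetric polynomial in the $x_j^2$; the Weyl group of $SO(2m)$ allows only sign-changes of even parity, so its invariants are $\Q[p_1,\ldots,p_{m-1},e]$ with $e=x_1\cdots x_m$ subject to $e^2=p_m$. Substituting these descriptions into the Leray--Borel quotient $(\mathbb{S}\mathfrak{t}^*)^{W_{SO(k)\times SO(n-k)}}/(\mathbb{S}\mathfrak{t}^*)^{W_{SO(n)}}_+$ presents $H^*(\tilde G_k(\R^n),\Q)$ in terms of the Pontryagin classes $p_i,\bar p_j$ of the two universal bundles, plus possible Euler classes of each summand, modulo the positive-degree Pontryagin classes (and the Euler class, if $n$ is even) of the trivial $\R^n$-bundle on $\tilde G_k(\R^n)$.

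Finally I would take $\sigma^*$-invariants. A representative of the nontrivial coset in $S(O(k)\times O(n-k))/(SO(k)\times SO(n-k))$ can be chosen to reverse the orientations of both $\R^k$ and $\R^{n-k}$ simultaneously, so $\sigma^*$ preserves every Pontryagin class and negates each Euler class. In the mixed-parity case only one Euler class exists and is killed by the $\Z_2$-invariance. When both $k$ and $n-k$ are even, the product $e_V\bar e$ of the two Euler classes is $\sigma^*$-invariant, but the Whitney-sum formula identifies $e_V\bar e$ with the Euler class $e_n$ of the trivial bundle $\R^n$, which already vanishes in the Leray--Borel quotient; so again only the Pontryagin classes survive. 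In both cases the invariant subring is the polynomial ring generated by $p_1,\ldots,p_{[k/2]},\bar p_1,\ldots,\bar p_{[(n-k)/2]}$ modulo the single relation $(1+p_1+\cdots)(1+\bar p_1+\cdots)=1$ coming from the vanishing of the total Pontryagin class of the trivial $\R^n$-bundle on $G_k(\R^n)$, giving the desired presentation. The main obstacle I anticipate is this last step: keeping careful track of the Euler classes in the even--even case and verifying that the Whitney relation absorbs the potentially extra invariant $e_V\bar e$.
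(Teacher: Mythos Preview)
Your proposal is correct and follows the same route the paper indicates: the paper simply states that the result is obtained by applying the Leray--Borel theorem to the even-dimensional Grassmannian, without supplying any details. Your passage through the oriented double cover $\tilde G_k(\R^n)=SO(n)/(SO(k)\times SO(n-k))$ and subsequent $\Z_2$-averaging is the standard way to make that application precise, since the isotropy group $S(O(k)\times O(n-k))$ of the unoriented Grassmannian is disconnected; equivalently, it amounts to computing $W_H$-invariants in two stages, first for $W_{H_0}$ and then for $\pi_0(H)\cong\Z_2$. Your handling of the even--even case is fine: besides the invariant $e_V\bar e$ (killed by the relation $E=e_V\bar e=0$), the remaining $\sigma^*$-invariants $e_V^2$ and $\bar e^2$ are precisely the top Pontryagin classes $p_{k/2}$ and $\bar p_{(n-k)/2}$, so the invariant subring is exactly the Pontryagin ring you describe, and the relation $p_{k/2}\bar p_{(n-k)/2}=0$ coming from $(e_V\bar e)^2=0$ is the top graded piece of $(1+p)(1+\bar p)=1$.
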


As in Remark \ref{rmk:BorelLemma}, the Poincar\'e polynomial of the above quotient of polynomial ring can be calculated. Also note that each Pontryagin class $p_i$ is of degree $4i$.

\begin{cor}
	For an even-dimensional $G_{k}(\R^{n})$, the Poincar\'e polynomial of $H^*(G_k(\R^n),\Q)$ is
	\[
	\binom{[\frac{k}{2}]+[\frac{n-k}{2}]}{[\frac{k}{2}]}_{t^4} =\frac{\prod_{i=1}^{[\frac{k}{2}]+[\frac{n-k}{2}]}(1-t^{4i})}{\prod_{i=1}^{[\frac{k}{2}]}(1-t^{4i})\prod_{i=1}^{[\frac{n-k}{2}]}(1-t^{4i})}
	\]
\end{cor}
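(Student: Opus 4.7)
The plan is to compute the Poincaré polynomial of the graded quotient ring displayed in the preceding theorem by applying Borel's lemma exactly as sketched in Remark \ref{rmk:BorelLemma}. Because each Pontryagin class $p_i$ lives in degree $4i$ and each $\bar{p}_j$ in degree $4j$, the free polynomial ring $\Q[p_1,\ldots,p_{[\frac{k}{2}]};\bar{p}_1,\ldots,\bar{p}_{[\frac{n-k}{2}]}]$ has Poincaré series
\[
\prod_{i=1}^{[\frac{k}{2}]}\frac{1}{1-t^{4i}}\cdot\prod_{j=1}^{[\frac{n-k}{2}]}\frac{1}{1-t^{4j}}.
\]

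Next, I would expand the Whitney relation $(1+p_1+\cdots+p_{[\frac{k}{2}]})(1+\bar{p}_1+\cdots+\bar{p}_{[\frac{n-k}{2}]})=1$ and collect coefficients degree by degree. With the conventions $p_0=\bar{p}_0=1$ and $p_i=\bar{p}_j=0$ outside their defined ranges, one obtains $m:=[\frac{k}{2}]+[\frac{n-k}{2}]$ homogeneous relations $R_s=\sum_{i+j=s}p_i\bar{p}_j$, each of degree $4s$ for $s=1,\ldots,m$. Crucially, the number $m$ of relations equals the total number of polynomial generators, placing us in the complete-intersection regime where Borel's lemma applies cleanly.

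The key step is the regular-sequence hypothesis required by Borel's lemma. For this I invoke the standard fact that if a graded polynomial ring modulo the same number of homogeneous relations as generators is finite-dimensional as a vector space, then the relations automatically form a regular sequence. Finite-dimensionality of $H^*(G_k(\R^n),\Q)$ is immediate from compactness of the Grassmannian, so no further work is required. Borel's lemma then yields the Poincaré polynomial
\[
\frac{\prod_{s=1}^{m}(1-t^{4s})}{\prod_{i=1}^{[\frac{k}{2}]}(1-t^{4i})\prod_{j=1}^{[\frac{n-k}{2}]}(1-t^{4j})},
\]
which is by definition the Gaussian binomial coefficient $\binom{[\frac{k}{2}]+[\frac{n-k}{2}]}{[\frac{k}{2}]}_{t^4}$. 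The argument is strictly parallel to the mod-$2$ calculation behind Corollary \ref{cor:Poinc2}, with Pontryagin classes replacing Stiefel--Whitney classes and the grading multiplied by four; the only content beyond degree bookkeeping is the invocation of the regular-sequence hypothesis, which is the main (and only) technical obstacle.
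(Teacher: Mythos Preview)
Your argument is correct and is exactly the approach the paper intends: the paper's own justification consists of the single sentence ``As in Remark~\ref{rmk:BorelLemma}, the Poincar\'e polynomial of the above quotient of polynomial ring can be calculated,'' together with the observation that $\deg p_i=4i$. Your write-up merely fills in the degree bookkeeping and the regular-sequence verification that the paper leaves implicit, so there is nothing to add.
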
 

Using the relation between Gaussian binomial coefficients and restricted partition numbers (Corollary \ref{cor:Poinc2} or \cite{An76} P33 Theorem 3.1), we get the finite-term expansion
\[
\frac{\prod_{i=1}^{[\frac{k}{2}]+[\frac{n-k}{2}]}(1-t^{4i})}{\prod_{i=1}^{[\frac{k}{2}]}(1-t^{4i})\prod_{i=1}^{[\frac{n-k}{2}]}(1-t^{4i})}=\sum_{d\geq 0}p\Big(\big[\frac{k}{2}\big], \big[\frac{n-k}{2}\big]; d\Big)t^{4d}
\]

\begin{cor}
	For an even-dimensional $G_{k}(\R^{n})$, the degree-$d$ rational Betti number 
	\[
	\mathrm{dim}_\Q\,H^d(G_k(\R^n),\Q)=p\Big(\big[\frac{k}{2}\big], \big[\frac{n-k}{2}\big]; \frac{d}{4}\Big)
	\]
	which is nonzero only when $d$ is divisible by $4$.
\end{cor}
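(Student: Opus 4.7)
The plan is to read off the coefficient of $t^d$ in the Poincar\'e polynomial computed in the preceding corollary. By that corollary, for even-dimensional $G_k(\R^n)$ the Poincar\'e polynomial of $H^*(G_k(\R^n),\Q)$ equals the Gaussian binomial $\binom{[k/2]+[(n-k)/2]}{[k/2]}_{t^4}$. So the degree-$d$ rational Betti number is nothing but the coefficient of $t^d$ in this polynomial.

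First, I would invoke Corollary \ref{cor:Poinc2} (equivalently, Theorem 3.1 of \cite{An76}) which expands the Gaussian binomial $\binom{M+N}{M}_s$ as $\sum_{e\geq 0} p(M,N;e)\,s^e$, and then perform the substitution $s = t^4$ with $M = [k/2]$, $N = [(n-k)/2]$. This yields
\[
\binom{[k/2]+[(n-k)/2]}{[k/2]}_{t^4} \;=\; \sum_{e\geq 0} p\!\left(\left[\tfrac{k}{2}\right],\left[\tfrac{n-k}{2}\right]; e\right) t^{4e},
\]
which is precisely the finite-term expansion already displayed just before the corollary statement.

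Next, I would extract the coefficient of $t^d$ on both sides. Since only monomials $t^{4e}$ appear, the coefficient is $0$ unless $4 \mid d$; and when $4 \mid d$, the coefficient is $p([k/2], [(n-k)/2]; d/4)$. Using the convention from the definition of $p(M,N;\cdot)$ that $p(M,N;x)=0$ when $x$ is not a non-negative integer, both cases combine into the single formula
\[
\dim_\Q H^d(G_k(\R^n),\Q) \;=\; p\!\left(\left[\tfrac{k}{2}\right], \left[\tfrac{n-k}{2}\right]; \tfrac{d}{4}\right),
\]
which is the claimed identity.

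There is no substantive obstacle here: the statement is a direct corollary obtained by matching coefficients, and the only thing to check is the bookkeeping that $[k/2]+[(n-k)/2]$ appearing in the Gaussian binomial matches the two parameters of the partition function after the substitution $t \mapsto t^4$. Both the vanishing outside degrees divisible by $4$ and the value in degrees divisible by $4$ are immediate from the fact that each Pontryagin class $p_i$ has degree $4i$, which is exactly what forces the Poincar\'e polynomial to be a polynomial in $t^4$.
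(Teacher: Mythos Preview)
Your argument is correct and is exactly the paper's approach: the corollary is read off directly from the finite-term expansion of $\binom{[k/2]+[(n-k)/2]}{[k/2]}_{t^4}$ obtained via Corollary~\ref{cor:Poinc2} (or \cite{An76}, Theorem~3.1) under the substitution $t\mapsto t^4$. The paper presents no separate proof beyond displaying that expansion, and your coefficient-matching together with the convention that $p(M,N;x)=0$ for non-integer $x$ is precisely what is intended.
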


The computation of rational cohomology of odd-dimensional real Grassmannian was due to Takeuchi.

\begin{thm}[\cite{Ta62} P320]
	Suppose the dimension $k(n-k)$ of $G_{k}(\R^{n})$ is odd, i.e. $k$ odd, $n$ even, then there is an odd-degree element $r\in H^{n-1}(G_{k}(\R^{n}),\Q)$ such that
	\begin{align*}
	H^*({G}_{k}(\R^{n}),\Q) =\frac{\Q[p_1,p_2,\ldots,p_{[\frac{k}{2}]};\bar{p}_1,\bar{p}_2,\ldots,\bar{p}_{[\frac{n-k}{2}]};r]}{(1+p_1+\cdots+p_{[\frac{k}{2}]})(1+\bar{p}_1+\cdots+\bar{p}_{[\frac{n-k}{2}]})=1; r^2=0}
	\end{align*}
\end{thm}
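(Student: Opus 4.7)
My plan is to compute $H^*(G_k(\R^n),\Q)$ via the rational Eilenberg--Moore (Koszul) spectral sequence attached to the homogeneous presentation $G_k(\R^n)=SO(n)/S(O(k)\times O(n-k))$, where $S(O(k)\times O(n-k))=\{(A,B):\det A\cdot\det B=1\}$ is the stabilizer of an unoriented $k$-plane in $SO(n)$.

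First I would observe that because $n$ is even and $k$, $n-k$ are both odd, the central element $-I_n=(-I_k,-I_{n-k})$ lies in $S(O(k)\times O(n-k))$ but not in $SO(k)\times SO(n-k)$, giving an internal direct-product decomposition
\[
S(O(k)\times O(n-k))\;\cong\;SO(k)\times SO(n-k)\times\langle -I_n\rangle.
\]
Consequently $BS(O(k)\times O(n-k))\simeq B(SO(k)\times SO(n-k))\times B\Z/2$, and since $B\Z/2$ is rationally acyclic in positive degree,
\[
H^*(BS(O(k)\times O(n-k)),\Q)=\Q[p_1,\ldots,p_{[k/2]};\bar p_1,\ldots,\bar p_{[(n-k)/2]}]
\]
(Pontryagin classes only; no Euler classes since both summand bundles have odd rank).

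Second, I would apply the rational Eilenberg--Moore spectral sequence to the classifying-space fibration $G_k(\R^n)\to BS(O(k)\times O(n-k))\to BSO(n)$. Since both cohomology rings are polynomial, the spectral sequence collapses at $E_2$ (by the standard Koszul/Borel argument for homogeneous spaces), yielding
\[
H^*(G_k(\R^n),\Q)=\mathrm{Tor}^{*,*}_{H^*(BSO(n),\Q)}\!\bigl(H^*(BS(O(k)\times O(n-k)),\Q),\Q\bigr).
\]

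Third, writing $n=2m$, we have $H^*(BSO(n),\Q)=\Q[p_1,\ldots,p_{m-1},e]$ with Euler class $e$ of degree $n$. The restriction map sends $p_i\mapsto\sum_{a+b=i}p_a\bar p_b$ by the Whitney sum formula, and sends $e$ to the Euler class of the direct sum, which vanishes rationally because it is a product of two odd-rank Euler classes. I would then resolve $\Q$ over $\Q[p_1,\ldots,p_{m-1},e]$ by the Koszul complex with exterior generators $\tilde p_i$ of degree $4i-1$ and $\tilde e$ of degree $n-1$, and tensor with the target ring. The differential $d\tilde p_i=\sum_{a+b=i}p_a\bar p_b$ cuts out exactly the Whitney relations $(1+p_1+\cdots+p_{[k/2]})(1+\bar p_1+\cdots+\bar p_{[(n-k)/2]})=1$, while $d\tilde e=0$ leaves the exterior class $r:=\tilde e$ in degree $n-1$ satisfying $r^2=0$. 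Assembling these two pieces gives precisely the presentation in the theorem.

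The main obstacle is justifying the rational Eilenberg--Moore collapse in the presence of the disconnected stabilizer $S(O(k)\times O(n-k))$: one must check that its rational cohomology retains a polynomial structure (true by the direct-product splitting above) and that the induced map to $H^*(BSO(n),\Q)$ really is given by the Whitney formula on $p_i$ and by zero on $e$ (the latter requires care because rationally the inclusion factors through the connected $SO(k)\times SO(n-k)$ component). Once these are in place, the Koszul computation is purely formal, and the odd-degree generator $r$ emerges as the Koszul dual of the Euler class of the ambient tangent bundle.
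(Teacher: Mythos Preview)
The paper does not supply its own proof of this statement; it is quoted as a result of Takeuchi \cite{Ta62} and used as input for the Poincar\'e--polynomial computations that follow. So there is no argument in the paper to compare yours against.

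Your outline is correct, and the obstacle you single out can be dispatched cleanly. Since $n$ is even, $-I_n$ lies in the connected group $SO(n)$, so left multiplication by $-I_n$ on the oriented Grassmannian $\tilde G_k(\R^n)=SO(n)/\bigl(SO(k)\times SO(n-k)\bigr)$ is homotopic to the identity. But $-I_n$ is central, so this left multiplication coincides with right multiplication by $-I_n$, which is exactly the deck transformation of the double cover $\tilde G_k(\R^n)\to G_k(\R^n)$. Hence the $\Z/2$-action on $H^*(\tilde G_k(\R^n),\Q)$ is trivial and $H^*(G_k(\R^n),\Q)=H^*(\tilde G_k(\R^n),\Q)$. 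You may therefore run the Cartan/Koszul model for the connected pair $SO(k)\times SO(n-k)\subset SO(n)$, where the collapse is classical: the Euler class $e\in H^{n}(BSO(n),\Q)$ restricts to zero (product of two odd-rank Euler classes), contributing the exterior generator $r$ in degree $n-1$, while $p_1,\ldots,p_{m-1}$ (with $n=2m$) restrict to the $m-1$ Whitney relations, a regular sequence in the $(m-1)$-variable polynomial ring $H^*\!\bigl(B(SO(k)\times SO(n-k)),\Q\bigr)$ because the quotient is finite-dimensional by the same Borel lemma invoked in Remark~\ref{rmk:BorelLemma}. This yields precisely the presentation stated.
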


\begin{cor}
	For an odd-dimensional $G_{k}(\R^{n})$, the Poincar\'e polynomial of $H^*(G_k(\R^n),\Q)$ is
	\[
	(1+t^{n-1})\binom{[\frac{k}{2}]+[\frac{n-k}{2}]}{[\frac{k}{2}]}_{t^4} =(1+t^{n-1})\frac{\prod_{i=1}^{[\frac{k}{2}]+[\frac{n-k}{2}]}(1-t^{4i})}{\prod_{i=1}^{[\frac{k}{2}]}(1-t^{4i})\prod_{i=1}^{[\frac{n-k}{2}]}(1-t^{4i})}
	\]
	with finite-term expansion
	\[
	\sum_{d\geq 0}p\Big(\big[\frac{k}{2}\big], \big[\frac{n-k}{2}\big]; d\Big)t^{4d}+\sum_{d\geq 0}p\Big(\big[\frac{k}{2}\big], \big[\frac{n-k}{2}\big]; d\Big)t^{4d+n-1}
	\]
	And the degree-$d$ rational Betti number 
	\[
	\mathrm{dim}_\Q\,H^d(G_k(\R^n),\Q)= p\Big(\big[\frac{k}{2}\big], \big[\frac{n-k}{2}\big]; \frac{d}{4}\Big)+p\Big(\big[\frac{k}{2}\big], \big[\frac{n-k}{2}\big]; \frac{d-n+1}{4}\Big)
	\]
	which is nonzero only when $d$ or $d-n+1$ is divisible by $4$.
\end{cor}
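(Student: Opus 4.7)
The plan is to deduce this corollary as an almost immediate consequence of Takeuchi's theorem (the preceding result), combined with the Poincar\'e polynomial computation already carried out in the even-dimensional case. First I would observe that by Takeuchi's theorem, the rational cohomology ring decomposes as a tensor product
\[
H^*(G_k(\R^n),\Q) \;\cong\; \frac{\Q[p_1,\ldots,p_{[\frac{k}{2}]};\bar{p}_1,\ldots,\bar{p}_{[\frac{n-k}{2}]}]}{(1+\sum p_i)(1+\sum \bar{p}_j)=1} \;\otimes_\Q\; \Q[r]/(r^2),
\]
because $r$ is an odd-degree generator with $r^2=0$ and its only relations in Takeuchi's presentation are independent of the Pontryagin-class part. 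Consequently the Poincar\'e polynomial factors as a product of the Poincar\'e polynomials of the two factors.

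Next I would handle each factor. The polynomial-ring factor is identical to the one appearing in the even-dimensional Grassmannian case, so its Poincar\'e polynomial has already been identified (via Borel's lemma, exactly as in Remark \ref{rmk:BorelLemma}) with the Gaussian binomial coefficient $\binom{[\frac{k}{2}]+[\frac{n-k}{2}]}{[\frac{k}{2}]}_{t^4}$, where the substitution $t \mapsto t^4$ reflects the fact that each Pontryagin class $p_i$ sits in degree $4i$. The second factor $\Q[r]/(r^2)$, with $r$ in degree $n-1$, contributes the factor $(1+t^{n-1})$. Multiplying gives the claimed Poincar\'e polynomial.

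To obtain the finite-term expansion and the explicit Betti-number formula, I would invoke once more the identity relating Gaussian binomial coefficients and restricted partition numbers (the same identity used in Corollary \ref{cor:Poinc2} or Andrews's book reference): the substitution $t \mapsto t^4$ turns $\binom{[\frac{k}{2}]+[\frac{n-k}{2}]}{[\frac{k}{2}]}_{t^4}$ into $\sum_{d\ge 0} p([\frac{k}{2}],[\frac{n-k}{2}];d)\,t^{4d}$. Multiplying by $(1+t^{n-1})$ splits the sum into the two displayed series, one shifted in degree by $n-1$. Reading off the coefficient of $t^d$ from the resulting expansion yields
\[
\dim_\Q H^d(G_k(\R^n),\Q) \;=\; p\Big(\big[\tfrac{k}{2}\big],\big[\tfrac{n-k}{2}\big];\tfrac{d}{4}\Big) + p\Big(\big[\tfrac{k}{2}\big],\big[\tfrac{n-k}{2}\big];\tfrac{d-n+1}{4}\Big),
\]
where by the convention fixed in the definition of $p(M,N;\cdot)$ the two summands vanish automatically unless $d$, respectively $d-n+1$, is a non-negative multiple of $4$.

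There is no genuine obstacle: the only non-routine input is the tensor-product decomposition of the ring, which follows directly from the shape of Takeuchi's presentation (the generator $r$ is separate and satisfies only $r^2=0$). Everything else is bookkeeping with Gaussian binomials and partition numbers that has already been set up in the even-dimensional corollary.
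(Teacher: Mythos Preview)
Your proposal is correct and follows exactly the route the paper has in mind: the corollary is stated without proof in the paper, but the surrounding text makes clear that it is obtained from Takeuchi's presentation by multiplying the $(1+t^{n-1})$ factor (coming from $r$ with $r^2=0$) against the Gaussian binomial already computed via Borel's lemma in the even case, and then expanding via the partition identity used earlier. Your write-up simply makes explicit the tensor-product splitting that the paper leaves implicit.
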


\begin{rmk}
	Casian and Kodama (\cite{CK} P11 Theorem 5.1, P12 Conjecture 6.1) used combinatorial argument related with Young diagrams to calculate the Poincar\'e polynomial of rational(integral) cohomology of real Grassmannian, and stated the rational ring structure in terms of characteristic classes.
\end{rmk}

\vskip 20pt
\section{Generating function of dimensions of torsions}
\vskip 15pt
With the $\Z_2$ and rational cohomology understood, we can work out the integral homology and cohomology groups of real Grassmannian using the following simple lemmas.

\begin{lem}
	Assume the torsions of the integral homology of a compact manifold $M$ are all of the order of a prime $p$, i.e. $H_d(M,\Z)\cong\Z^{{FB}_d}\oplus\Z_p^{{TB}_d}$ where ${FB}_d = \dim_\Q H_*(M,\Q) = \dim_\Q H^*(M,\Q)$. Then we have
	\begin{align*}
	H^d(M,\Z)&\cong\Z^{{FB}_d}\oplus\Z_p^{{TB}_{d-1}}\\
	H_d(M,\Z_p)&\cong H^d(M,\Z_p) \cong \Z_p^{{FB}_d+{TB}_{d-1}+{TB}_d}
	\end{align*}
\end{lem}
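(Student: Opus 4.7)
The plan is to apply the Universal Coefficient Theorems (UCT) for cohomology and for homology, using the hypothesis on the shape of $H_*(M,\Z)$. The only auxiliary input is the standard table of $\mathrm{Hom}$, $\mathrm{Ext}$, $\otimes$, and $\mathrm{Tor}$ values on the summands $\Z$ and $\Z_p$: in particular, $\mathrm{Hom}(\Z_p,\Z)=0$ while $\mathrm{Ext}(\Z_p,\Z)=\Z_p$, and both $\Z\otimes\Z_p$ and $\mathrm{Tor}(\Z_p,\Z_p)$ equal $\Z_p$.

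For the first identity I would feed the hypothesis into the split short exact sequence of the cohomology UCT,
\[
0\to\mathrm{Ext}(H_{d-1}(M,\Z),\Z)\to H^d(M,\Z)\to\mathrm{Hom}(H_d(M,\Z),\Z)\to 0.
\]
The Hom term strips off the torsion of $H_d$ and leaves $\Z^{FB_d}$, while the Ext term kills the free part of $H_{d-1}$ and returns $\Z_p^{TB_{d-1}}$. Adding the two summands gives the asserted formula.

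For the mod $p$ homology, the analogous sequence of the homology UCT,
\[
0\to H_d(M,\Z)\otimes\Z_p\to H_d(M,\Z_p)\to\mathrm{Tor}(H_{d-1}(M,\Z),\Z_p)\to 0,
\]
contributes $\Z_p^{FB_d+TB_d}$ from the tensor term (both the free and $\Z_p$ summands survive reduction) and $\Z_p^{TB_{d-1}}$ from the $\mathrm{Tor}$ term. For $H^d(M,\Z_p)$ the fastest route is to observe that, with field coefficients, cohomology is the $\Z_p$-linear dual of homology and hence has the same dimension in every degree; alternatively one can rerun the cohomology UCT with $\Z_p$ coefficients, where the $\mathrm{Hom}$ part contributes $\Z_p^{FB_d+TB_d}$ and the $\mathrm{Ext}$ part contributes $\Z_p^{TB_{d-1}}$.

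There is no genuine obstacle — this is a mechanical bookkeeping lemma. The one fine point worth flagging is that the UCT sequences split non-canonically, which is automatic for finitely generated abelian groups and is what lets us read off honest isomorphisms rather than merely extensions; compactness of $M$ is used only to guarantee the required finite generation.
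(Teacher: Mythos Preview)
Your argument is correct and is exactly the paper's approach: the paper simply states that the lemma follows immediately from the universal coefficient theorems for homology and cohomology in $\Z$, $\Z_p$, and $\Q$ coefficients, and you have written out precisely those computations.
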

\begin{proof}
	This follows immediately from the universal coefficient theorems of homology and cohomology in $\Z,\Z_p,\Q$ coefficients.
\end{proof}

Denote $B_d$ as the integer $\mathrm{dim}_{\Z_p}\,H_d(M,\Z_p)=\mathrm{dim}_{\Z_p}\, H^d(M,\Z_p)={FB}_d+{TB}_{d-1}+{TB}_d$. Let's consider the generating functions
\begin{align*}
P&=\sum_{d\geq 0}B_d t^d & FP&=\sum_{d\geq 0}{FB}_d t^d\\
\underline{TP}&=\sum_{d\geq 0}{TB}_d t^d & \overline{TP}&=\sum_{d\geq 0}{TB}_{d-1} t^d = t\,\underline{TP}
\end{align*}
where $P,\,FP$ are the Poincar\'e polynomials of the $\Z_p$ and rational homology(cohomology) of $M$, and $\underline{TP},\,\overline{TP}$ are the generating functions of the dimensions of torsions in integral homology and cohomology of $M$.

\begin{lem}\label{lem:PoincTorsion}
	Under the same assumption of the previous Lemma, we have
	\begin{align*}
	\underline{TP}&=\frac{P-FP}{1+t}\\
	\overline{TP}&=\frac{P-FP}{1+t^{-1}}=t\,\frac{P-FP}{1+t}
	\end{align*}
\end{lem}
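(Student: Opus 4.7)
The plan is to read off the formulas directly from the dimension identity $B_d = FB_d + TB_{d-1} + TB_d$ established in the previous lemma, by packaging it into generating functions.

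First I would multiply the identity $B_d = FB_d + TB_{d-1} + TB_d$ by $t^d$ and sum over $d \geq 0$. On the left this produces $P$. On the right, the three summands give respectively $FP$, $\underline{TP}$, and $\sum_{d \geq 0} TB_{d-1} t^d$. The last sum is, by definition, $\overline{TP}$, and the index shift $d \mapsto d+1$ (together with the convention $TB_{-1} = 0$, which holds since $H_{-1}(M,\Z) = 0$) shows that $\overline{TP} = t \cdot \underline{TP}$. Thus
\[
P = FP + \underline{TP} + \overline{TP} = FP + (1+t)\,\underline{TP}.
\]

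Solving for $\underline{TP}$ gives $\underline{TP} = (P - FP)/(1+t)$, which is the first claim. The second claim then follows by multiplying by $t$ and rewriting $t/(1+t) = 1/(1+t^{-1})$.

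The only thing to watch out for is the boundary case $d = 0$ in the sum defining $\overline{TP}$, but the convention that $TB_d = 0$ for $d < 0$ (equivalently, $p(M,N;d) = 0$ for negative $d$, consistent with the paper's earlier convention on partition numbers) makes this harmless. There is no genuine obstacle here; the content of the lemma lies entirely in the universal coefficient computation of the previous lemma, and the present statement is a purely formal repackaging.
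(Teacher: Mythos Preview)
Your proof is correct and follows exactly the paper's approach: sum the identity $B_d = FB_d + TB_{d-1} + TB_d$ against $t^d$, recognize the resulting series as $P = FP + \overline{TP} + \underline{TP} = FP + (1+t)\underline{TP}$, and solve. (There is a harmless slip where you list the second and third summands in swapped order, but your displayed equation is correct.)
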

\begin{proof}
	Since $B_d={FB}_d+{TB}_{d-1}+{TB}_d$, then
	\begin{align*}
	P &=\sum_{d\geq 0}B_d t^d=\sum_{d\geq 0}{FB}_d t^d+\sum_{d\geq 0}{TB}_{d-1} t^d+\sum_{d\geq 0}{TB}_d t^d\\
	&=FP+\overline{TP}+\underline{TP}=FP+t\,\underline{TP}+\underline{TP}\\
	&=FP+(1+t)\,\underline{TP}
	\end{align*}
	Similarly, we have $P=FP+(1+t^{-1})\,\overline{TP}$.
\end{proof}
\begin{rmk}
	The difference $P-FP$ is divisible by $1+t$, because both $P(-1)$ and $FP(-1)$ give the Euler characteristic number of $M$.
\end{rmk}

We can also express the dimension ${TB}_d$ of torsions directly.
\begin{lem}\label{lem:BettiTorsion}
	Expanding the both sides of the equalities in the previous Lemma, we get
	\begin{align*}
	{TB}_d  = (B_d-{FB}_d)-(B_{d-1}-{FB}_{d-1})+(B_{d-2}-{FB}_{d-2})\pm\cdots
	\end{align*}
\end{lem}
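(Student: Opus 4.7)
The plan is to extract coefficients from the identity $(1+t)\,\underline{TP} = P - FP$ established in Lemma~\ref{lem:PoincTorsion}. First I would rewrite the left-hand side as $\sum_{d \ge 0}(TB_d + TB_{d-1})\,t^d$, adopting the convention $TB_{-1}=0$, and match coefficients of $t^d$ on both sides. This gives the one-step recursion
$$TB_d + TB_{d-1} = B_d - FB_d, \qquad d \ge 0.$$

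Next I would unfold this recursion by repeated substitution: replace $TB_{d-1}$ by $(B_{d-1}-FB_{d-1}) - TB_{d-2}$, then $TB_{d-2}$ by $(B_{d-2}-FB_{d-2})-TB_{d-3}$, and so on, down to the base value $TB_{-1}=0$. After $d+1$ substitutions one arrives at the alternating sum stated in the lemma, and the process terminates cleanly both because of the boundary convention and because $P-FP$ is a polynomial (as $M$ is compact), so only finitely many terms are nonzero.

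An equivalent presentation would be to expand $1/(1+t) = \sum_{i \ge 0}(-1)^i t^i$ as a formal power series, multiply by the polynomial $P - FP$, and simply read off the coefficient of $t^d$; this bypasses the induction and makes the alternating signs manifest. In either presentation there is no substantive obstacle: the content is purely a formal manipulation of generating functions once Lemma~\ref{lem:PoincTorsion} is granted. The only point worth a sanity check is the base case $d=0$, where the recursion yields $TB_0 = B_0 - FB_0$, consistent with the vanishing of torsion in degree $0$ for connected $M$.
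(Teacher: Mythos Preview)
Your proposal is correct and matches the paper's approach: the paper offers no separate proof beyond the phrase ``Expanding the both sides of the equalities in the previous Lemma,'' and what you have written is exactly that expansion (via the recursion $TB_d+TB_{d-1}=B_d-FB_d$ or equivalently via the geometric series for $1/(1+t)$). You have simply supplied the routine details that the paper leaves implicit.
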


By Theorem \ref{thm:Ehresmann2}, the torsions of integral homology of real Grassmannian are all of order $2$. Hence we can apply the above lemmas together with the understanding of the $\Z_2$ and rational cohomology of real Grassmannian.

\begin{prop}
	For a real Grassmannian $G_{k}(\R^{n})$,
	\begin{itemize}
		\item if its dimension $k(n-k)$ is even, then the generating function of dimensions of the free part in $H_*(G_{k}(\R^{n}),\Z)$ and $H^*(G_{k}(\R^{n}),\Z)$ is 
		\[
		\binom{[\frac{k}{2}]+[\frac{n-k}{2}]}{[\frac{k}{2}]}_{t^4}
		\]
		The generating functions of dimensions of the $\Z_2$-torsions in $H_*(G_{k}(\R^{n}),\Z)$ and $H^*(G_{k}(\R^{n}),\Z)$ are respectively
		\begin{align*}
		\frac{1}{1+t}\bigg[\binom{n}{k}_t-\binom{[\frac{k}{2}]+[\frac{n-k}{2}]}{[\frac{k}{2}]}_{t^4}\bigg] && \frac{t}{1+t}\bigg[\binom{n}{k}_t-\binom{[\frac{k}{2}]+[\frac{n-k}{2}]}{[\frac{k}{2}]}_{t^4}\bigg]
		\end{align*}
		\item if its dimension $k(n-k)$ is odd, then the generating function of dimensions of the free part in $H_*(G_{k}(\R^{n}),\Z)$ and $H^*(G_{k}(\R^{n}),\Z)$ is 
		\[
		(1+t^{n-1})\binom{[\frac{k}{2}]+[\frac{n-k}{2}]}{[\frac{k}{2}]}_{t^4}
		\]
		The generating functions of dimensions of the $\Z_2$-torsions in $H_*(G_{k}(\R^{n}),\Z)$ and $H^*(G_{k}(\R^{n}),\Z)$ are respectively
		\begin{align*}
		\frac{1}{1+t}\bigg[\binom{n}{k}_t-(1+t^{n-1})\binom{[\frac{k}{2}]+[\frac{n-k}{2}]}{[\frac{k}{2}]}_{t^4}\bigg] && \frac{t}{1+t}\bigg[\binom{n}{k}_t-(1+t^{n-1})\binom{[\frac{k}{2}]+[\frac{n-k}{2}]}{[\frac{k}{2}]}_{t^4}\bigg]
		\end{align*}
	\end{itemize}
	Moreover, the dimensions of torsions can be expressed in terms of restricted partition numbers.
\end{prop}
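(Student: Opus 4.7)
The plan is to assemble the proposition directly from the lemmas and computations already established in the excerpt. By Theorem \ref{thm:Ehresmann2}, all torsions of $H_*(G_k(\R^n),\Z)$ have order $2$, so the hypothesis of Lemma \ref{lem:PoincTorsion} applies with $p=2$. The strategy is thus to identify the two ingredients $P$ (the mod-$2$ Poincar\'e polynomial) and $FP$ (the rational Poincar\'e polynomial), substitute them into the identities $\underline{TP}=(P-FP)/(1+t)$ and $\overline{TP}=t(P-FP)/(1+t)$, and read off both the free-part generating function (which is simply $FP$) and the two torsion generating functions.

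First I would insert $P=\binom{n}{k}_t$ from Corollary \ref{cor:Poinc2}, which holds regardless of the parity of $k(n-k)$. Then I would split according to the parity of $k(n-k)$. In the even-dimensional case, the Leray--Borel computation gives $FP=\binom{[k/2]+[(n-k)/2]}{[k/2]}_{t^4}$; in the odd-dimensional case, Takeuchi's theorem supplies the additional factor $(1+t^{n-1})$. In both cases, the free part of $H_d(M,\Z)$ has rank $\dim_\Q H_d(M,\Q)$, so the generating function for $\{FB_d\}$ coincides with $FP$, which yields the first claim in each bullet. Substituting into Lemma \ref{lem:PoincTorsion} then produces the displayed expressions for $\underline{TP}$ and $\overline{TP}$.

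For the closing ``Moreover'' clause expressing torsion dimensions in terms of restricted partition numbers, my plan is to invoke Lemma \ref{lem:BettiTorsion}. Since $B_d=p(k,n-k;d)$ by Corollary \ref{cor:Poinc2}, and $FB_d$ is extracted from the explicit rational Poincar\'e polynomial as $p([k/2],[(n-k)/2];d/4)$ in the even case (with an additional shifted copy $p([k/2],[(n-k)/2];(d-n+1)/4)$ in the odd case), the alternating sum formula of Lemma \ref{lem:BettiTorsion} immediately yields $TB_d$ as an alternating $\Z$-linear combination of restricted partition numbers.

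I do not expect any serious obstacle; the proposition is a bookkeeping consequence of Ehresmann's order-$2$ torsion theorem, Chern's mod-$2$ ring structure, and the Leray--Borel and Takeuchi rational computations, glued together via the universal coefficient relations packaged in Lemmas \ref{lem:PoincTorsion} and \ref{lem:BettiTorsion}. The one point deserving explicit verification is that $(P-FP)/(1+t)$ is a polynomial with non-negative integer coefficients: divisibility by $1+t$ follows from $P(-1)=FP(-1)=\chi(G_k(\R^n))$ as noted in the remark after Lemma \ref{lem:PoincTorsion}, while non-negativity is automatic because $\underline{TP}$ and $\overline{TP}$ are genuine dimension generating functions.
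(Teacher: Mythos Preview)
Your proposal is correct and matches the paper's approach exactly: the proposition is presented there as the immediate consequence of combining Ehresmann's order-$2$ torsion theorem with the $\Z_2$ and rational Poincar\'e polynomials via Lemmas \ref{lem:PoincTorsion} and \ref{lem:BettiTorsion}, and your write-up spells out precisely those substitutions.
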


\begin{rmk}
	Using Ehresmann's explicit boundary map of Schubert cells, Junkind (\cite{Ju79} P36 Table IV) computed the integral homology groups of $G_{k}(\R^{n})$ for $n\leq 7$.
\end{rmk}

The above formulas can be implemented in computer algebra systems. Here we give Python 2.7 codes using the Sympy module.

\input{codes.tex}

\vskip 20pt
\section{Acknowledgment}
\vskip 15pt
The author thanks Luis Casian and Yuji Kodama for the invitation to talk at the seminar of Geometry, Combinatorics, and Integrable Systems at OSU. The author also thanks David Anderson for asking the question of torsions that motivates this note.

\vskip 20pt
\bibliographystyle{amsalpha}

\begin{thebibliography}{}

\bibitem[An76]{An76}
G.~Andrews, 
\newblock{\em The theory of partitions}, Encyclopedia of Mathematics and its Applications, Vol. 2. \newblock Addison-Wesley Publishing Co., Reading, Mass.-London-Amsterdam, 1976. xiv+255 pp. 

\bibitem[Bo53]{Bo53}
A.~Borel,
\newblock Sur la cohomologie des espaces fibr\'es principaux et des espaces homog\`enes de groupes de Lie compacts,
\newblock {\em Ann. of Math.} (2) 57, (1953). 115-207.

\bibitem[BT82]{BT82}
R.~Bott and L.~Tu,
\newblock{\em Differential forms in algebraic topology}, Graduate Texts in Mathematics, 82. 
\newblock Springer-Verlag, New York-Berlin, 1982.

\bibitem[Ch48]{Ch48}
S.S.~Chern, 
\newblock On the multiplication in the characteristic ring of a sphere bundle,
\newblock{\em Ann. of Math.} (2) 49, (1948). 362-372. 

\bibitem[CK]{CK}
L.~Casian and Y.~Kodama,
\newblock On the cohomology of real Grassmann manifolds,
\newblock preprint, \url{https://arxiv.org/pdf/1309.5520v1.pdf}

\bibitem[Eh37]{Eh37}
C.~Ehresmann,
\newblock Sur la topologie de certaines vari\'et\'es alg\'ebriques r\'eelles,
\newblock {\em J. Math. Pures Appl.} (9) 16, (1937). 69-100 

\bibitem[Le49]{Le49}
J.~Leray,
\newblock {D\'etermination dans les cas non exceptionnels de l'anneau de cohomologie de l'espace homog\`ene quotient d'un groupe de Lie compact par un sous-groupe de m\^eme rang},
\newblock {\em C. R. Acad. Sci. Paris} 228, (1949). 1902-1904. 

\bibitem[Ju79]{Ju79}
S.~Junkind,
\newblock Some computations of the homology of real grassmannian manifolds,
\newblock PhD thesis, University of British Columbia, Vancouver, 1979. 

\bibitem[Ta62]{Ta62}
M.~Takeuchi, 
\newblock On Pontrjagin classes of compact symmetric spaces,
\newblock {\em J. Fac. Sci. Univ. Tokyo Sect. I}, 9 (1962) 313-328. 


\end{thebibliography}

\end{document}